\documentclass[12pt]{amsart}%

\usepackage{amssymb}
\usepackage{amsmath}
\usepackage{amsthm}
\usepackage{amscd,mdwlist}
\usepackage[margin=1in]{geometry}%
\usepackage{hyperref}%[hypertex]

\pagestyle{plain}

\theoremstyle{plain}

\newtheorem{lemma}{Lemma}[section]

\theoremstyle{remark}
\newtheorem{definition}{Definition}[section]
\newtheorem{remark}{Remark}[section]
\newtheorem{examples}{Examples}[section]
\newtheorem{assumption}{Assumption}[section]

\DeclareMathOperator{\supp}{supp}

\DeclareMathOperator{\clos}{clos}
\DeclareMathOperator{\lin}{span}
\DeclareMathOperator{\cpct}{Cap}

\DeclareMathOperator{\dom}{dom}
\DeclareMathOperator{\diverg}{div}
\DeclareMathOperator{\grad}{grad}
\DeclareMathOperator{\im}{Im}

\DeclareMathOperator{\tr}{tr}

\begin{document}

\title{Finite energy coordinates and vector analysis on fractals}
\author{Michael Hinz$^1$}
\address{$^1$ Fakult\"at f\"ur Mathematik, Universit\"at Bielefeld, Postfach 100131, 33501 Bielefeld, Germany}
\email{mhinz@math.uni-bielefeld.de}

\author{Alexander Teplyaev$^2$}
\address{$^2$Department of Mathematics, University of Connecticut, Storrs, CT 06269-3009 USA}
\email{Alexander.Teplyaev@uconn.edu}
\thanks{$^2$Research supported in part by NSF grant DMS-0505622}

\date{\today}

\begin{abstract}
We consider (locally) energy finite coordinates associated with a strongly local regular Dirichlet form on a metric measure space. We give coordinate formulas for substitutes of tangent spaces, for gradient and divergence operators and for the infinitesimal generator. As examples we discuss Euclidean spaces, Riemannian local charts, domains on the Heisenberg group and the measurable Riemannian geometry on the Sierpinski gasket.\tableofcontents\end{abstract}\maketitle

\section{Introduction}

Suitable coordinate maps are 
%main 
tools in many branches of geometry. For instance, 
%local coordinates that allow 
smooth coordinate changes are the crucial ingredient in the definition of a differentiable structure on a manifold and therefore omnipresent in 
%modern 
differential geometry (e.g. \cite{GHL90, Jost}). 
For a general metric measure space we can 
%of course 
not expect to find local coordinates that transform smoothly. 
However, in the field of analysis on fractals Kusuoka \cite{Ku89, Ku93}, Kigami \cite{Ki93, Ki08}, Strichartz \cite{St00}, Teplyaev \cite{T00}, Hino \cite{Hino13}, Kajino \cite{Ka12,Ka12t} and others have contributed to a concept that is now referred to as 'measurable Riemannian geometry'. 
% and displays remarkable features, for instance in terms of heat conduction. 
This concept is based on Dirichlet forms and involves the use of harmonic functions as 
%what could be called 
'global coordinates'. 
In probability similar ideas can already be found in works of 
Doob, Dynkin, Skorohod. On the other hand there is 
%some 
recent progress in the studies of a first order calculus on fractals, \cite{CGIS,CS09, HRT13, HTc, HTams, IRT} again based on Dirichlet form theory, \cite{ChFu12, FOT94}, which allows to discuss differential $1$-forms 
and
%respectively 
vector fields, partially based on  \cite{CS03, S90}.

In the present note we consider metric measure spaces, 
equipped with a strongly local Dirichlet form and consider associated (locally) energy finite coordinates. 
%This setup essentially covers the situation of a single local coordinate chart on a Riemannian manifold and at the same time is general enough to accommodate measurable Riemannian geometries. 
%We 
Analogously to Riemannian geometry, we 
provide coordinate expressions for the gradient and divergence operators (derivation and coderivation) used in the first order theory, and for the infinitesimal generator. 
The purpose of the present paper is rather didactical and we do do not claim any substantial novelty. Instead we hope to facilitate understanding of how the measurable first order calculus is related to Euclidean, Riemannian, sub-Riemannian and measurable Riemannian situations.

\section{Preliminaries}
Let $X$ be a locally compact separable metric space and $\mu$ a nonnegative Radon measure on $X$ such that $\mu(U)>0$ for all nonempty open $U\subset X$. Let $(\mathcal{E},\mathcal{F})$ be a \emph{strongly local regular Dirichlet form} on $L_2(X,\mu)$, that is: 
\,{\bf(1)}~$\mathcal{F}$ is a dense subspace of $L_2(X,\mu)$ and $\mathcal{E}:\mathcal{F}\times\mathcal{F}\to\mathbb{R}$ is a nonnegative definite symmetric bilinear form; we denote  $\mathcal{E}(f):=\mathcal{E}(f,f)$; 
\,{\bf(2)}~%the space 
$\mathcal{F}$ is a Hilbert space %, equipped 
with the norm $\sqrt{\mathcal{E}_1(f)}:=\big({\mathcal{E}(f)+\left\| f \right\|^2_{L_2(X,\mu)}}\big)^{1/2}$\,;
\,{\bf(3)}~for any $f\in\mathcal{F}$ we have $(f\vee 0)\wedge 1\in\mathcal{F}$ and $\mathcal{E}((f\vee 0)\wedge 1)\leq \mathcal{E}(f)$,
where $f\vee g:=\max\left\lbrace f,g\right\rbrace$ and $f\wedge g:=\min\left\lbrace f,g\right\rbrace$;
% and where we use the notation $\mathcal{E}(f)$,
\,{\bf(4)}~%the space 
$\mathcal{C}:=\mathcal{F}\cap C_c(X)$  
is dense both in $\mathcal{F}$ with respect to the norm $\sqrt{\mathcal{E}_1(f)}$,
%, where $\mathcal{E}_1(f):=\mathcal{E}_1(f,f)$, 
and in the space $C_c(X)$ of continuous compactly supported functions with respect to the uniform norm;
\,{\bf(5)}~if $f,g\in\mathcal{C}$ are such that $g$ is constant on an open neighborhood of $\supp f$ then $\mathcal{E}(f,g)=0$.
%\end{itemize}

\medskip

To each Dirichlet form $(\mathcal{E},\mathcal{F})$ on $L_2(X,\mu)$ there exists a unique non-positive self-adjoint operator $(L, \dom\:L)$, called the \emph{infinitesimal generator of $(\mathcal{E},\mathcal{F})$}, such that $\dom\:L\subset \mathcal{F}$ and 
\[\mathcal{E}(f,g)=-\left\langle Lf, g\right\rangle_{L_2(X,\mu)}\] 
for all $f\in \dom\:L$ and $g\in\mathcal{F}$. See \cite{ChFu12, FOT94}. For pointwise products we have
\begin{equation}\label{E:BDbound}
\mathcal{E}(fg)^{1/2}\leq \mathcal{E}(f)^{1/2}\left\|g\right\|_{L_\infty(X,\mu)}+\mathcal{E}(g)^{1/2}\left\|f\right\|_{L_\infty(X,\mu)}, \ \ f,g\in \mathcal{F}\cap L_\infty(X,\mu),
\end{equation}
\cite[Corollary I.3.3.2]{BH91}, and in particular, the space $\mathcal{C}:=\mathcal{F}\cap C_c(X)$ is an algebra. For any $f,g\in \mathcal{C}$ a signed Radon measure $\Gamma(f,g)$ on $X$ is defined by 
\begin{equation}\label{E:energymeas}
\int_X\varphi d\Gamma(f,g)=\frac12\big(\mathcal{E}(f,g\varphi)+\mathcal{E}(g,f\varphi)-\mathcal{E}(fg,\varphi)\big), \ \ \varphi\in\mathcal{C}.
\end{equation}
 By approximation in $\mathcal{F}$ we also define $\Gamma(f,g)$ for any   $f,g\in\mathcal{F}$, referred to as
the \emph{(mutual) energy measure of $f$ and $g$}, see \cite{FOT94}. 
We denote the nonnegative measure $\Gamma(f)=\Gamma(f,f)$. 
Below it will be advantageous to consider functions that are only locally of finite energy. 
%Set
%\begin{multline}
%\mathcal{F}_{loc}=\left\lbrace f\in L_{2,loc}(X,\mu): \text{ for any relatively compact open set $V\subset X$} \right.\notag\\
%\left. \text{there exists some $u\in\mathcal{F}$ such that $f|_V=u|_V$ $\mu$-a.e.}\right\rbrace.
%\end{multline}
%
We define $\mathcal{F}_{loc}$ as the set of functions $ f\in L_{2,loc}(X,\mu) $ such that for any relatively compact open set $V\subset X$ there exists some $u\in\mathcal{F}$ such that $f|_V=u|_V$ $\mu$-a.e. 
Using an exhaustion of $X$ by an increasing sequence of relatively compact open sets and related cut-off functions we can define $\Gamma(f)$ for $f\in\mathcal{F}_{loc}$.
%, and polarization yields a signed Radon measure  $\Gamma(f,g)$ on $X$ for any $f,g\in\mathcal{F}_{loc}$. The measure $\Gamma(f,g)$ is . By construction . 
If $V$ is relatively compact open and $u\in \mathcal{F}$ agrees with $f$ $\mu$-a.e. on $V$ then 
\begin{equation}\label{E:GammaV}
\Gamma(f)|_V=\Gamma(u)|_V.
\end{equation}

\begin{examples}
A prototype for a strongly local regular Dirichlet form is the \emph{Dirichlet integral}
\[\mathcal{E}(f)=\int_{\mathbb{R}^n}|\nabla f|^2\:dx\]
on $L_2(\mathbb{R}^n)$, where $\mathcal{F}$ is the Sobolev space $H^1(\mathbb{R}^n)$ of functions $f\in L_2(\mathbb{R}^n)$ with $\frac{\partial f}{\partial x_i}\in L_2(\mathbb{R}^n)$ for all $i$. Note that $C^1_c(\mathbb{R}^n)$ is dense in $H^1(\mathbb{R}^n)$ and in $C_c(\mathbb{R}^n)$. The generator is the Laplacian $L=\Delta$ and the energy measures are given by $\Gamma(f)=|\nabla f|^2\:dx$.
\end{examples}

A nonnegative Radon measure $m$ on $X$ is called \emph{energy dominant} if all energy measures $\Gamma(f)$, $f\in \mathcal{F}$, are absolutely continuous with respect to $m$, \cite{Hino13, HRT13}. By $\frac{d\Gamma(f)}{dm}$ we denote the corresponding Radon-Nikodym densities. We say that a nonnegative Radon measure $m$ on $X$ is \emph{smooth} if it assigns zero to any Borel set of zero capacity. The capacity $\cpct(A)$ of an open set $A$ is defined as 
\[\cpct(A):=\inf_u\inf_B\left\lbrace \mathcal{E}_1(u): \text{$u\in\mathcal{F}$ and $u\geq 1$ $\mu$-a.e. on $B$, \text{$B\supset A$, $B$ open}}\right\rbrace,\] 
and if the infimum is taken over the empty set, $\cpct(A)$ is defined to be infinity. 
%The capacity of an arbitrary subset $A\subset X$ is defined as 
%\[\cpct(A):=\inf\left\lbrace \cpct(B): \text{$B\supset A$, $B$ open}\right\rbrace.\] 
For further details see \cite{ChFu12, FOT94}. The reference measure $\mu$ is always smooth.

Let $\varphi\in \mathcal{C}$, let $V$ be a relatively compact open neighborhood $V$ of $\supp \varphi$ and suppose $(f_n)_n\subset \mathcal{F}_{loc}$. We say that \emph{$\varphi$ is locally approximated by the sequence $(f_n)_n$ on $V$} if there is a sequence $(u_n)_n\subset \mathcal{F}$ with $\lim_n \mathcal{E}_1(\varphi-u_n)=0$ and $f_n|_V=u_n|_V$ $\mu$-a.e.
for all $n$. 
The following lemma follows from (\ref{E:GammaV}), \cite[Theorem 2.1.4]{FOT94} and the smoothness of $m$.

\begin{lemma}\label{L:locapprox}
Let $\varphi\in\mathcal{C}$ and let $V$ be a relatively compact open neighborhood $V$ of $\supp \varphi$. Suppose that $\varphi$ is locally approximated by $(f_n)_n\subset \mathcal{F}_{loc}$ on $V$. Then 
$\lim_n \Gamma(\varphi-f_n)(V)=0$. If, moreover, $m$ is a smooth energy dominant measure and the functions $f_n$ are continuous on $V$, then there is a subsequence $(f_{n_k})_k$ such that $\lim_k f_{n_k}=\varphi$ $m$-a.e. on $V$.
\end{lemma}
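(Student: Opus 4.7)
My plan splits along the two assertions of the lemma. For the first one, I would use the locality property (\ref{E:GammaV}) directly. Since $\varphi\in\mathcal{C}\subset\mathcal{F}\subset\mathcal{F}_{loc}$ and $f_n\in\mathcal{F}_{loc}$, the difference $\varphi-f_n$ lies in $\mathcal{F}_{loc}$, and by the definition of local approximation it agrees $\mu$-a.e.\ on $V$ with $\varphi-u_n\in\mathcal{F}$. Hence (\ref{E:GammaV}) gives $\Gamma(\varphi-f_n)|_V=\Gamma(\varphi-u_n)|_V$, so
\[
\Gamma(\varphi-f_n)(V)=\Gamma(\varphi-u_n)(V)\leq\Gamma(\varphi-u_n)(X)=\mathcal{E}(\varphi-u_n)\leq\mathcal{E}_1(\varphi-u_n)\to 0.
\]

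For the second assertion I would invoke \cite[Theorem 2.1.4]{FOT94}, which tells us that $\mathcal{E}_1$-convergence $u_n\to\varphi$ in $\mathcal{F}$ implies, after extraction of a subsequence $(u_{n_k})_k$, that the quasi-continuous $\mathcal{E}$-modifications $\tilde u_{n_k}$ converge quasi-everywhere to a quasi-continuous modification $\tilde\varphi$ of $\varphi$. Because $\varphi$ is continuous, it is itself quasi-continuous, so $\tilde\varphi=\varphi$ q.e.\ on $X$. Similarly, on $V$ the function $f_{n_k}$ is continuous by assumption, hence quasi-continuous, and it agrees with $u_{n_k}$ $\mu$-a.e.\ on $V$; since any two quasi-continuous representatives of the same $L_2$-class coincide q.e., we get $f_{n_k}=\tilde u_{n_k}$ q.e.\ on $V$. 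Combining the two identifications yields $f_{n_k}\to\varphi$ quasi-everywhere on $V$. Finally, because $m$ is smooth, sets of zero capacity have $m$-measure zero, and quasi-everywhere convergence on $V$ upgrades to $m$-a.e.\ convergence on $V$.

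The main technical point, and essentially the only place where care is needed, is the identification of the continuous functions $f_{n_k}$ and $\varphi$ with the quasi-continuous modifications produced by \cite[Theorem 2.1.4]{FOT94}. One must use the standard fact that two quasi-continuous functions which agree $\mu$-a.e.\ on an open set $V$ also agree quasi-everywhere on $V$; this is a local statement and applies despite $f_n$ lying only in $\mathcal{F}_{loc}$, since the argument concerns only restrictions to $V$. Everything else is a routine chain of inequalities and the smoothness of $m$.
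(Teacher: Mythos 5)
Your proof is correct and follows exactly the route the paper indicates: the paper gives no written proof but states that the lemma ``follows from (\ref{E:GammaV}), \cite[Theorem 2.1.4]{FOT94} and the smoothness of $m$,'' and your argument uses precisely these three ingredients in the intended way, including the careful identification of the continuous functions $f_{n_k}$ and $\varphi$ with the quasi-continuous modifications via the fact that quasi-continuous functions agreeing $\mu$-a.e.\ on an open set agree q.e.\ there.
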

%\begin{proof} \end{proof}

\section{Finite energy coordinates}

Let $y=(y^i)_{i\in I}$ be a finite or countable collection of locally bounded functions $y^i$. Given a finite ordered subset $J=( n_1,\dots, n_k) $ of $I$, the space of all functions of form $f=F(y^{n_1},...,y^{n_k})$,
where the functions $F$ are polynomials in $k$ variables and such that $F(0)=0$, will be denoted by $\mathcal{P}_J(y)$. For a fixed collection $(y^i)_{i\in I}$ set
\begin{equation}\label{E:union}
\mathcal{P}(y):=\bigcup_{J\subset I} \mathcal{P}_J(y),
\end{equation}
the union taken over all ordered finite subsets $J$ of $I$. Note that $\mathcal{P}(y)$ is an algebra of locally bounded functions. For any $k$ we regard the space $\mathbb{R}^k$ as a subspace of $\mathbb{R}^\mathbb{N}$ by putting $(v_1,v_2,\dots, v_k, v_{k+1}, v_{k+2}, \dots):=(v_1,v_2,\dots, v_k, 0, 0,\dots)$ for any given $(v_1,v_2,\dots, v_k)\in\mathbb{R}^k$. Similarly, we consider $(k\times k)$-matrices as linear operators from $\mathbb{R}^\mathbb{N}$ to $\mathbb{R}^\mathbb{N}$.

\begin{definition}\label{D:coordseq}
Let $m$ be a smooth energy dominant measure for $(\mathcal{E},\mathcal{F})$. A finite or countable collection $y=(y^i)_{i\in I}$ of continuous and locally bounded functions $y^i\in \mathcal{F}_{loc}$ is called a \emph{coordinate sequence for $(\mathcal{E},\mathcal{F})$ with respect to $m$} if
%%\begin{enumerate}
\smallskip\item[\ (i)] any $\varphi\in\mathcal{C}$ can locally be approximated on a relatively compact neighborhood $V$ of $\supp \varphi$ by a sequence of 
elements of $\mathcal{P}(y)$ 
\item[\ (ii)] for any $i\in I$ we have
\[\frac{d\Gamma(y^i)}{dm}\in L_1(X,m)\cap L_\infty(X,m),\]
and for any $i$ and $j$
\[Z^{ij}(x):=\frac{d\Gamma(y^i,y^j)}{dm}(x)\]
are Borel functions (versions) such that for $m$-a.e. $x\in X$, 
$
Z(x):=
(Z^{ij}(x))_{ij=1}^\infty$ defines a bounded 
 symmetric  nonnegative definite
linear operator $Z(x):l_2 \to l_2$.
%%\end{enumerate} 
\item[\ (iii)] We say that \emph{the coordinates $y^i$ have finite energy} if $y^i\in\mathcal{F}$ for all $i\in I$.
\end{definition}

A coordinate sequence $y=(y^i)_{i\in I}$ induces a mapping $y:X\to \mathbb{R}^\mathbb{N}$.

\begin{remark}
Condition (i) in Definition \ref{D:coordseq} makes sense because we have $\mathcal{P}(y)\subset\mathcal{F}_{loc}$. If the coordinates $y^i$ have finite energy the inclusion $\mathcal{P}(y)\subset\mathcal{F}$ is clear from (\ref{E:BDbound}). To see this inclusion in the general case it suffices to show that for any continuous and locally bounded $f,g\in\mathcal{F}_{loc}$ we have $fg\in 
\mathcal{F}_{loc}$. Clearly $fg\in L_{2,loc}(X,\mu)$. Further, given a relatively compact open set $V\subset X$ we can find a suitable cutoff function $\chi\in\mathcal{C}$ with $0\leq \chi\leq 1$ and $\chi\equiv 1$ on $V$, a relatively compact open neighborhood of $\supp\chi$ and functions $u,v\in\mathcal{F}$ such that $f|_U=u|_U$ and $g|_U=v|_U$ $\mu$-a.e. Clearly $\chi u\in L_2(X,\mu)$, and using locality, \cite[Corollary 3.2.1]{FOT94}, 
\[\mathcal{E}(\chi u)^{1/2}=\left(\int_U d\Gamma(\chi u)\right)^{1/2}\leq \left(\int_U \chi^2d\Gamma(u)\right)^{1/2}+\left(\int_U \widetilde{u}^2d\Gamma(\chi)\right)^{1/2},\]
where $\widetilde{u}$ is a quasi-continuous version of $u$. See e.g. \cite[Chapter II]{FOT94} for quasi-continuity and the Appendix in \cite{HKT13} for comments on the formula (which also follows from Cauchy-Schwarz applied to (\ref{E:HSP}) below). Approximating $u$ in $\mathcal{E}_1^{1/2}$-norm by a sequence from $\mathcal{C}$ we see that $\chi u$ is the limit in $\mathcal{E}_1^{1/2}$-norm of a sequence from $\mathcal{C}$, and by completeness $\chi u$ is in $\mathcal{F}$. Similarly for $\chi v$. Both functions are bounded on $U$ $\mu$-a.e. and vanish outside $U$, hence are also members of $L_2(X,\mu)$. Therefore $\chi^2 u v\in\mathcal{F}$ by (\ref{E:BDbound}), what implies $fg\in \mathcal{F}_{loc}$. 
\end{remark}

In Section \ref{S:construct} we show that (under an additional continuity assumption) it is always possible to construct a smooth finite energy dominant measure and a corresponding coordinate sequence of energy finite coordinates. The following examples relate Definition \ref{D:coordseq} to well known situations.

\begin{examples}\label{Ex:1}
%\mbox{}\begin{enumerate}\smallskip\item[]
\ (1) Consider
\[\mathcal{E}(f):=\sum_{i,j=1}^n \int_{\mathbb{R}^n} a_{ij}(x)\frac{\partial f}{\partial x_i}(x)\frac{\partial f}{\partial x_j}(x)dx,\ \ f\in C_c^1(\mathbb{R}^n),\]
where $a_{ij}=a_{ji}$ are bounded Borel functions satisfying $\sum_{i,j=1}^n a_{ij}(x)\xi_i\xi_j\geq c|\xi|^2$ with a universal constant $c>0$ for any $\xi\in\mathbb{R}^n$ and $\lambda^n$-a.e. $x\in \mathbb{R}^n$. Here $\lambda^n$ denotes the $n$-dimensional Lebesgue measure $\lambda^n(dx)=dx$. Then $(\mathcal{E}, C_c^1(\mathbb{R}^n))$ is closable in the space $L_2(\mathbb{R}^n)$, and its closure  $(\mathcal{E}, H^1(\mathbb{R}^n))$ is a strongly local regular Dirichlet form. Obviously $\lambda^n$ is smooth and energy dominant for $(\mathcal{E},H^1(\mathbb{R}^n))$. The Euclidean coordinates $y^k(x)=x_k$, $k=1,\dots, n$, form a coordinate sequence for $(\mathcal{E},H^1(\mathbb{R}^n))$ with respect to $\lambda^n$. Note that $\nabla y^k=e_k$ is the $k$-th unit vector in $\mathbb{R}^n$, and we have 
\[Z^{ij}(x)=a_{ij}(x)\ \ \text{ for $\lambda^n$-a.e. $x\in \mathbb{R}^n$}\] 
and $i,j=1,\dots n$. This shows (ii). If $\varphi\in C_c^1(\mathbb{R}^n)$ then we can find a relatively compact open set $V$ containing $\supp \varphi$ on which the function $\varphi$ can be approximated it in $C^1$-norm by a sequence of polynomials in the variables $x_1,\dots, x_n$, hence in the coordinates $y^1,\dots, y^n$. Multiplying these polynomials by a (nonnegative) $C^1$-cut-off function supported in $V$ and equal to one on $\supp\varphi$, the approximation is seen to take place in $H^1(\mathbb{R}^n)$.  As $C_c^1(\mathbb{R}^n)$ is dense in $H^1(\mathbb{R}^n)$, this implies (i). The coordinates $y^k$ do not have finite energy. 
\smallskip\item[\ (2)] Let $(M,g)$ be an $n$-dimensional Riemannian manifold, \cite{GHL90, Jost}, let $(V,y)$ be a local chart with coordinates $y=(y^1,\dots, y^n)$ and $U$ a relatively compact open set with $\overline{U}\subset V$. By $dvol$ we denote the Riemannian volume (restricted to $U$). The closure $(\mathcal{E},\mathring{H}^1(U))$ in $L_2(U,dvol)$ of 
\[\mathcal{E}(f):=\int_U\left\langle \nabla f, \nabla f\right\rangle_{T_xM}dvol(x),\ \ f\in C^1_c(U).\]
is a strongly local Dirichlet form. The reference measure $dvol$ is smooth and energy dominant, for any $k=1,\dots, n$ we have
\[\nabla y^k=g^{kj}\frac{\partial}{\partial y^j} \]
and
\[Z^{kk}=\left\langle \nabla y^k, \nabla y^k\right\rangle_{TM}=g^{kj}g^{ki}\left\langle \frac{\partial}{\partial y^i}, \frac{\partial}{\partial y^j}\right\rangle_{TM}=g^{kk}\]
and therefore (ii). Recall that 
\[\nabla f= g^{ij}\frac{\partial f}{\partial y^i}\frac{\partial}{\partial y^j}, \ \ g_{ij}=\left\langle\frac{\partial}{\partial y^i}, \frac{\partial}{\partial y^j}\right\rangle_{TM}\]
and $g^{ki}g_{ij}=\delta^k_j$. For a function $f\in C^1_c(U)$ the function $f\circ y^{-1}$ is a member of $C^1(W)$, and accordingly it can be approximated in $C^1(W)$-norm by a sequence $(p_m)_m$ of polynomials in the variables $y^1,\dots, y^n$. Consequently the functions $p_m\circ y$ approximate $f$ in $C^1(U)$-norm (note that the differentials $d((p_m\circ y)\circ y^{-1})(y(x))$ approximate $d(f\circ y^{-1})(y(x))$ uniformly in $x\in U$). This implies (i). Here the $y^i$ are not in $\mathcal{F}$ because they do not satisfy the Dirichlet boundary conditions on $\partial U$.
\smallskip\item[\ (3)] A sub-Riemannian example is given by the Heisenberg group $\mathbb{H}$, \cite{DM05, GL14, M02, Str86}, realized as $\mathbb{R}^3$ together with the non-commutative
multiplication
\[(\xi_1,\eta_1,\zeta_1)\cdot(\xi_2, \eta_2, \zeta_2):=(\xi_1+\xi_2, \eta_1+\eta_2, \zeta_1+\zeta_2+\xi_1\eta_2-\eta_1\xi_2).\]
Left multiplication by $(\xi,0,0)$ and $(0,\eta,0)$ yields the left-invariant vector fields 
\[X(q):=\frac{\partial}{\partial \xi}\Big|_q-\frac12\eta\frac{\partial}{\partial\zeta}\Big|_q \ \ \text{ and } \ \ Y(q):=\frac{\partial}{\partial \eta}\Big|_q+\frac12\xi\frac{\partial}{\partial\zeta}\Big|_q,\] 
and at each $q=(\xi,\eta,\zeta)\in\mathbb{H}$ the tangent vectors $X(q)$ and $Y(q)$ span a two-dimensional subspace $V_q$ of the tangent space $T_q\mathbb{H}\cong \mathbb{R}^3$. The sub-Riemannian metric is given by the inner products
$\left\langle\cdot,\cdot\right\rangle_{V_q}$ on the spaces $V_q$ that makes $(X(q),Y(q))$ an orthonormal basis, respectively. We use the Haar measure on $\mathbb{H}$, which coincides with the Lebesgue measure $\lambda^3$ on $\mathbb{R}^3$. Now let $U\subset \mathbb{H}$ be a connected bounded open set and consider the bilinear form 
\[\mathcal{E}(f):=\int_U((Xf)^2+(Yf)^2)\:d\lambda^3,\ \ f\in C_c^1(U).\]
Let $(\mathcal{E},\mathring{S}^1(U))$ denote the closure of $(\mathcal{E}, C_c^1(U))$ in $L_2(U)$. Obviously $\lambda^3$ is smooth and energy dominant. A coordinate sequence for $(\mathcal{E},\mathring{S}^1(U))$ and $\lambda^3$ is given by $y=(y^1,y^2,y^3):=(\xi,\eta,\zeta)$. Condition (i) follows again by polynomial approximation in $C_c^1(U)$. It is immediate that $Xy^1=1$, $Yy^1=0$, similarly for $y^2$, and $Xy^3=-\frac{\eta}{2}$, 
% as well as 
$Yy^3=\frac{\xi}{2}$, which yields the symmetric and nonnegative definite matrices 
\[Z(q)=\left(\begin{array}{ccc} 1 &0 &-\frac{\eta}{2}\\
0 & 1 & \frac{\xi}{2}\\
-\frac{\eta}{2} & \frac{\xi}{2} & \frac{\xi^2+\eta^2}{4}\end{array}\right),\]
so that (ii) is satisfied. For any $q\in\mathbb{H}$ the matrix $Z(q)$ has rank two. As in (2) the coordinates are not in $\mathcal{F}$.
\smallskip\item[\ (4)] We consider a prototype of a finitely ramified fractal in finite energy coordinates. Let $K$ denote the Sierpinski gasket, seen as the post-critically self-similar structure generated by the maps $f_j:\mathbb{R}^2\to\mathbb{R}^2$, $f_j(x)=\frac12(x+p_j)$,  $j=1,2,3$,
where $p_1$, $p_2$ and $p_3$ are the vertices of an equilateral triangle in $\mathbb{R}^2$. Let $(\mathcal{E},\mathcal{F})$ be the standard resistance form on $K$, obtained as the rescaled limit of discrete energy forms along a sequence of graphs with increasing vertex sets $V_n$ 'approximating $K$',
\[\mathcal{E}(f)=\lim_{n\to\infty}\left(\frac53\right)^n\sum_{p,q\in V_n}(f(p)-f(q))^2,\]
see e.g. \cite{Ki93, Ki01, Ku89, Ku93, Str06} for details. With $\left\lbrace p_1, p_2, p_3\right\rbrace$ as boundary and with Dirichlet boundary conditions there exist two 
%nonconstant 
harmonic functions $y^1, y^2\in\mathcal{F}$ with $\mathcal{E}(y^1)=\mathcal{E}(y^2)=1$ and $\mathcal{E}(y^1,y^2)=0$ such that the mapping $y:K\to\mathbb{R}^2$
%, defined by 
\begin{equation}\label{E:SGy}
y(x):=(y^1(x), y^2(x)), \ \ x\in K,
\end{equation}
is a homeomorphism from $K$ onto its image $y(K)\subset \mathbb{R}^2$. We consider $K$ endowed with the \emph{Kusuoka measure} $\nu$, defined as the sum 
\[\nu:=\Gamma(y^1)+\Gamma(y^2)\]
of the energy measure $\Gamma(y^1)$ and $\Gamma(y^2)$ of $y^1$ and $y^2$, respectively. The resistance form $(\mathcal{E},\mathcal{F})$ induces a strongly local Dirichlet form on $L_2(K,\nu)$, for which the finite measure $\nu$ is smooth and energy dominant. The pair $(y^1, y^2)$ is a coordinate sequence for this form: Condition (ii) is satisfied by construction, condition (i) follows by polynomial approximation and the density of functions of type $F\circ y$, $F\in C^1(\mathbb{R}^2)$, in $\mathcal{F}$, see e.g. \cite{Ki93, Ku89, Ku93, T08}. The operators $Z(x)$ may be viewed as $(2\times 2)$-matrices, and for $\nu$-a.e. $x\in K$ the matrix $Z(x)$ is symmetric, nonnegative definite and has rank one. 
%In this example the coordinates $y^1$ and $y^2$ have finite energy.
%\end{enumerate}
\end{examples}

\section{Energy, fibers and bundles}\label{S:fibers}

In what follows we will assume throughout that $(\mathcal{E},\mathcal{F})$ is a strongly local regular Dirichlet form on $L_2(X,\mu)$, $m$ is a smooth energy dominant measure and $y=(y^i)_{i\in I}$ be a coordinate sequence for $(\mathcal{E},\mathcal{F})$ with respect to $m$. 

We would like to emphasize that unless stated otherwise we do not assume that the reference measure itself is energy dominant or that the form $(\mathcal{E},\mathcal{F})$ has a restriction that is closable with respect to the energy dominant measure $m$ under consideration.

In Example \ref{Ex:1} (4), a
  well known formula of Kusuoka \cite{Ku89} and Kigami \cite{Ki93} is 
  %tells that for the standard resistance form on the Sierpinski gasket, equipped with the Kusuoka measure as in we have
\begin{equation}\label{E:KuKi0}
\mathcal{E}(f,g)=\int_K\left\langle \nabla F(y), Z(x)\nabla G(y)\right\rangle_{\mathbb{R}^2} \nu(dx),
\end{equation}
for all $f=F\circ y$ and $g=G\circ y$ with $F,G\in C^1(\mathbb{R}^2)$. 
%, where $y=(y^1,y^2)$ is the coordinate sequence (\ref{E:SGy}). 
This identity expresses the \emph{energy} in terms of coordinates. As the matrix $Z$ varies measurably in $x$, it has been named a \emph{measurable Riemannian metric}, \cite{Hino13, Ka12, Ki08}. The following is version of (\ref{E:KuKi0})   immediately following from the chain rule \cite[Theorem 3.3.2]{FOT94}.

\begin{lemma}\label{L:KuKi}\mbox{}
Let $m$ be a smooth energy dominant measure and $(y^i)_{i\in I}$ a coordinate sequence. For all $f=F\circ y$ and $g=G\circ y$ from $\mathcal{P}(y)$ we have 
\begin{equation}\label{E:KuKi}
\Gamma(f,g)(x)=\left\langle \nabla F(y), Z(x)\nabla G(y)\right\rangle_{l_2}
\end{equation}
for $m$-a.e. $x\in X$. If in addition $f,g\in\mathcal{F}$, then
\[\mathcal{E}(f,g)=\int_X\left\langle \nabla F, Z\nabla G\right\rangle_{l_2}dm.\]
\end{lemma}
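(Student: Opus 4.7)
The plan is to derive (\ref{E:KuKi}) directly from the chain rule for energy measures \cite[Theorem 3.3.2]{FOT94}, after localizing to account for the fact that the coordinates $y^{n_j}$ lie only in $\mathcal{F}_{loc}$. The integrated identity then follows by Radon-Nikodym together with the energy dominance of $m$.

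First we reduce to a finite index set. Since $f,g \in \mathcal{P}(y)$, there exist a common finite ordered $J = (n_1, \ldots, n_k) \subset I$ and polynomials $\widetilde F, \widetilde G$ in $k$ variables with $\widetilde F(0) = \widetilde G(0) = 0$ such that $f = \widetilde F(y^{n_1}, \ldots, y^{n_k})$ and $g = \widetilde G(y^{n_1}, \ldots, y^{n_k})$. Under the embedding $\mathbb{R}^k \hookrightarrow \mathbb{R}^\mathbb{N}$ fixed in the paper, the gradients of $F$ and $G$ have only finitely many nonzero entries, so the inner product on the right of (\ref{E:KuKi}) collapses to the finite sum
\[\sum_{i,j=1}^k \frac{\partial \widetilde F}{\partial x_i}(y^{n_1},\ldots,y^{n_k})\,\frac{\partial \widetilde G}{\partial x_j}(y^{n_1},\ldots,y^{n_k})\,Z^{n_i n_j}(x).\]

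Next we localize. Fix a relatively compact open set $V \subset X$ and choose $u_j \in \mathcal{F}$ with $u_j|_V = y^{n_j}|_V$ $\mu$-a.e. Since $\widetilde F, \widetilde G$ are $C^1$ and vanish at the origin, \cite[Theorem 3.3.2]{FOT94} yields $\widetilde F(u_1, \ldots, u_k), \widetilde G(u_1, \ldots, u_k) \in \mathcal{F}$ together with the identity
\[\Gamma\bigl(\widetilde F(u), \widetilde G(u)\bigr) = \sum_{i,j=1}^k \frac{\partial \widetilde F}{\partial x_i}(u)\,\frac{\partial \widetilde G}{\partial x_j}(u)\,\Gamma(u_i, u_j)\]
of signed Radon measures on $X$. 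Restricting to $V$ and using (\ref{E:GammaV}), the left-hand side becomes $\Gamma(f,g)|_V$, each $\Gamma(u_i, u_j)|_V$ equals $\Gamma(y^{n_i}, y^{n_j})|_V$, and the coefficient functions agree $\mu$-a.e.\ on $V$ with the corresponding expressions in $y^{n_j}$. Since $V$ is arbitrary and $X$ is exhausted by such sets, the identity holds globally as an equation of signed Radon measures.

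Finally we pass to densities. As $m$ is energy dominant all the measures involved are absolutely continuous with respect to $m$, and taking Radon-Nikodym derivatives produces (\ref{E:KuKi}) for $m$-a.e.\ $x$. For the second statement, when $f,g \in \mathcal{F}$ one has $\mathcal{E}(f,g) = \Gamma(f,g)(X)$ as a standard consequence of the construction of the energy measure from (\ref{E:energymeas}) \cite{FOT94}, so integrating the density against $m$ yields the integral representation. The only genuine obstacle is the localization step --- the chain rule in \cite{FOT94} is stated for $\mathcal{F}$-functions whereas $y^{n_j} \in \mathcal{F}_{loc}$ --- but (\ref{E:GammaV}) guarantees that the restriction of $\Gamma$ to a relatively compact open set is independent of the chosen $\mathcal{F}$-extension, so the local identities patch together consistently.
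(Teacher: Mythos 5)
Your proposal is correct and follows the same route the paper intends: the paper gives no written proof beyond the remark that the identity is ``immediately following from the chain rule \cite[Theorem 3.3.2]{FOT94}'', and your localization via (\ref{E:GammaV}), reduction to a finite index set, and passage to Radon--Nikodym densities with respect to the energy dominant measure $m$ is exactly the argument being invoked. The only cosmetic point is that the coefficient functions should be matched $\Gamma(u_i,u_j)$-a.e.\ rather than merely $\mu$-a.e.\ on $V$, which follows at once from quasi-continuity of the $u_j$, continuity of the $y^{n_j}$, and the fact that energy measures charge no set of zero capacity.
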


We rewrite (\ref{E:KuKi}) in a somewhat artificial way. For any $x\in X$ such that $Z(x)$ is symmetric and nonnegative definite, the bilinear extension of 
\begin{equation}\label{E:fiberSP}
\left\langle f_1\otimes g_1, f_2\otimes g_2\right\rangle_{\mathcal{H}_x}:=G_1(y)G_2(y)\left\langle \nabla F_1(y), Z(x)\nabla F_2(y)\right\rangle_{l_2},
\end{equation}
where $f_i=F_i\circ y$ and $g_i=G_i\circ y$ are members of $\mathcal{P}(y)$ with polynomials $F_i$ and $G_i$, $i=1,2$, defines a nonnegative definite symmetric bilinear form on the vector space $\mathcal{P}(y)\otimes\mathcal{P}(y)$. Let $\left\|\cdot\right\|_{\mathcal{H}_x}$ denote the associated Hilbert seminorm. Factoring out zero seminorm elements and completing, we obtain a Hilbert space $(\mathcal{H}_x, \left\langle\cdot,\cdot\right\rangle_{\mathcal{H},x})$. The $\mathcal{H}_x$-equivalence class of an element $f\otimes g$ of $\mathcal{P}(y)\otimes\mathcal{P}(y)$ we denote by $(f\otimes g)_x$. Note that for $m$-a.e. $x\in X$ the expression in (\ref{E:fiberSP}) equals
\[g_1(x)g_2(x)\frac{\Gamma(f_1,f_2)}{dm}(x).\]

\begin{examples}%\mbox{}
%\begin{enumerate}
%\smallskip\item[]
\ (1) In the situation of Example \ref{Ex:1} (1) we observe  $\mathcal{H}_x\cong \mathbb{R}^n$ for $\lambda^n$-a.e. $x\in\mathbb{R}^n$ and
\[\left\langle f_1\otimes g_1, f_2\otimes g_2\right\rangle_{\mathcal{H}_x}=g_1(x)g_2(x)\left\langle \nabla f_1 (x), a(x)\nabla f_2(x)\right\rangle_{\mathbb{R}^n},\]
where we write $a=(a_{ij})_{i,j=1}^n$. 
\smallskip\item[\ (2)] For the Riemannian situation in Example \ref{Ex:1} (2) we have
\[\left\langle f_1\otimes g_1, f_2\otimes g_2\right\rangle_{\mathcal{H}_x}=g_1(x)g_2(x)\left\langle df_1(x), df_2(x)\right\rangle_{T_x^\ast M}\]
for $dvol$-a.e. $x\in U$, where 
\begin{equation}\label{E:extder}
f\mapsto df=\sum_{i=1}\frac{\partial f}{\partial y^i}dy^i
\end{equation}
denotes the exterior derivation. Note that $\mathcal{H}_x\cong T_x^\ast M\cong T_xM \cong \mathbb{R}^n$. 
\smallskip\item[\ (3)] For the Heisenberg group as in Example \ref{Ex:1} (3), 
\[\left\langle f_1\otimes g_1, f_2\otimes g_2\right\rangle_{\mathcal{H}_q}=g_1(q)g_2(q)\left((X(q)f_1)(X(q)f_2)+(Y(q)f_1)( Y(q)f_2)\right)\]
for $\lambda^3$-a.e. $q\in U$. Here $\mathcal{H}_q$ is isometrically isomorphic to the horizontal fiber $V_q$.
%\end{enumerate}
\end{examples}

We proceed to a more global perspective. A nonnegative definite symmetric bilinear form on $\mathcal{C}\otimes\mathcal{C}$ can be introduced by extending
\begin{equation}\label{E:HSP}
\left\langle f_1\otimes g_1, f_2\otimes g_2\right\rangle_{\mathcal{H}}:=\int_X g_1(x)g_2(x)\:\Gamma(f_1, f_2)(x)m(dx).
\end{equation}
The associated Hilbert seminorm is denoted by $\left\|\cdot\right\|_{\mathcal{H}}$. Factoring out zero seminorm elements and completing yields another Hilbert space $\mathcal{H}$, usually referred to a the \emph{Hilbert space of $1$-forms associated with $(\mathcal{E},\mathcal{F})$}. This definition has some history, see e.g. \cite{Eb99, N85, S90}, and in the context of Dirichlet forms it was first introduced by Cipriani and Sauvageot in \cite{CS03}. Right and left actions of $\mathcal{C}$ on the space $\mathcal{C}\otimes\mathcal{C}$ can be defined by extending 
\begin{equation}\label{E:actions}
(f\otimes g)h:=f\otimes (gh)\ \ \text{ and  }\ \ h(f\otimes g)=(fh)\otimes g - h\otimes (fg).
\end{equation}
By strong locality they coincide. Moreover, they extend further to an action of $\mathcal{C}$ on $\mathcal{H}$ and $\left\|\omega h\right\|_{\mathcal{H}}\leq \left\|h\right\|_{L_\infty(X,m)}\left\|\omega\right\|_{\mathcal{H}}$ for any $\omega\in\mathcal{H}$ and $h\in\mathcal{C}$. A linear operator $\partial:\mathcal{C}\to\mathcal{H}$ can be introduced by setting 
\[\partial f:=f\otimes \mathbf{1}, \ \ f\in\mathcal{C},\]
note that $f\otimes\mathbf{1}$ is a member of $\mathcal{H}$, as can be seen from (\ref{E:HSP}) by approximating $\mathbf{1}$ pointwise. The operator $\partial$ is a derivation, i.e. 
\begin{equation}\label{E:Leibniz}
\partial(fg)=(\partial f)g+f\partial g,\ \ f,g\in\mathcal{C}.
\end{equation}
It satisfies 
\begin{equation}\label{E:partialbound}
\left\|\partial f\right\|_{\mathcal{H}}^2=\mathcal{E}(f),\ \ f\in\mathcal{C},
\end{equation}
and extends to a closed unbounded operator $\partial: L_2(X,\mu)\to\mathcal{H}$ with domain $\mathcal{F}$. 

Since the left action in (\ref{E:actions}) is also well defined for bounded Borel functions, approximation shows that $(f\otimes g)\mathbf{1}_V=(\partial f)g\mathbf{1}_V$ is in $\mathcal{H}$ for any $f,g\in \mathcal{F}$ and relatively compact open $V$. By locality, (\ref{E:GammaV}) and approximation (pointwise $m$-a.e.) we then have $(f\otimes g)\mathbf{1}_V\in\mathcal{H}$ even for locally bounded $f,g\in\mathcal{F}_{loc}$. Formulas (\ref{E:actions}) and (\ref{E:Leibniz}) have local versions valid for elements of $\mathcal{P}(y)$. Note also that for $m$-a.e $x\in X$,
\[\left\langle(\partial f)_x, (\partial g)_x\right\rangle_{\mathcal{H}_x}=\frac{d\Gamma(f,g)}{dm}(x).\]

The next lemma contains a corresponding version of Lemma \ref{L:KuKi}.

\begin{lemma}\label{L:approx}
For $f_i=F_i\circ y$ and $g_i=G_i\circ y$ from $\mathcal{P}(y)$, $i=1,2$, and any relatively compact open $V$ we have 
\begin{align}
\left\langle (f_1\otimes g_1)\mathbf{1}_V, f_2\otimes g_2\right\rangle_{\mathcal{H}}&=\int_V\left\langle (f_1\otimes g_1)_x, (f_2\otimes g_2)_x\right\rangle_{\mathcal{H}_x}m(dx)\notag\\
&=\int_V G_1(y)G_2(y)\left\langle \nabla F_1(y), Z(x)\nabla F_2(y)\right\rangle_{l_2} m(dx).\notag
\end{align}
If in addition $f,g\in\mathcal{F}$, then we can replace $V$ by $X$. Moreover,
\[\lin\left(\left\lbrace (f\otimes g)\mathbf{1}_V  : f,g\in\mathcal{P}(y), \text{$V\subset X$ relatively compact open}\right\rbrace\right)\]
is a dense subspace of $\mathcal{H}$. If the coordinates $y^i$ have finite energy, then $\mathcal{P}(y)\otimes\mathcal{P}(y)$ is a dense subspace of $\mathcal{H}$.
\end{lemma}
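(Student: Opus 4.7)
The plan is to verify the three assertions in order, drawing on Lemma \ref{L:KuKi}, the definition (\ref{E:HSP}), and Lemma \ref{L:locapprox}. For the integral identity, unwrapping the right action of (\ref{E:actions}) gives $(f_1\otimes g_1)\mathbf{1}_V = f_1\otimes(g_1\mathbf{1}_V)$, so by (\ref{E:HSP}) extended to bounded Borel multipliers one has $\langle (f_1\otimes g_1)\mathbf{1}_V, f_2\otimes g_2\rangle_\mathcal{H} = \int_V g_1 g_2\,d\Gamma(f_1,f_2)$. Energy dominance and Lemma \ref{L:KuKi} identify the Radon--Nikodym density $d\Gamma(f_1,f_2)/dm$ with $\langle \nabla F_1(y), Z\nabla F_2(y)\rangle_{l_2}$ $m$-a.e., yielding both equalities (the first being the fibrewise form). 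If the $f_i,g_i$ all lie in $\mathcal{F}$ then $f_i\otimes g_i\in\mathcal{H}$, and choosing an exhaustion $V_k\uparrow X$ by relatively compact open sets, dominated convergence applied to $\int g_1^2\mathbf{1}_{V_k^c}\,d\Gamma(f_1)$ shows $(f_1\otimes g_1)\mathbf{1}_{V_k}\to f_1\otimes g_1$ in $\mathcal{H}$, while the right-hand side passes to the limit with $V=X$ by the same device.

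Since $\mathcal{C}\otimes\mathcal{C}$ is dense in $\mathcal{H}$ by construction, the density assertion reduces to approximating a simple tensor $\varphi\otimes\psi$, $\varphi,\psi\in\mathcal{C}$. Let $K=\supp\varphi\cup\supp\psi$, pick a relatively compact open $V\supset K$, and invoke Definition \ref{D:coordseq}(i) to obtain $(f_n),(g_n)\subset\mathcal{P}(y)$ locally approximating $\varphi$ and $\psi$ on $V$; let $u_n,v_n\in\mathcal{F}$ be the accompanying witnesses, so that $\mathcal{E}_1(\varphi-u_n),\mathcal{E}_1(\psi-v_n)\to 0$ and $f_n=u_n$, $g_n=v_n$ $\mu$-a.e.\ on $V$. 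After truncating $v_n$ at $\pm\|\psi\|_\infty$, I may assume $\|v_n\|_\infty\leq\|\psi\|_\infty$. By locality together with (\ref{E:GammaV}), $(f_n\otimes g_n)\mathbf{1}_V$ coincides in $\mathcal{H}$ with $u_n\otimes(v_n\mathbf{1}_V)$, so expanding the squared norm
\[
\|\varphi\otimes\psi-(f_n\otimes g_n)\mathbf{1}_V\|_\mathcal{H}^2 = \int\psi^2 d\Gamma(\varphi) - 2\int\psi v_n\mathbf{1}_V d\Gamma(\varphi,u_n) + \int v_n^2\mathbf{1}_V d\Gamma(u_n)
\]
reduces the task to showing each term converges to $\int\psi^2 d\Gamma(\varphi)$. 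Lemma \ref{L:locapprox} yields $\Gamma(u_n-\varphi)(V)\to 0$, whence by Cauchy--Schwarz in $\Gamma$ the measures $\mathbf{1}_V\Gamma(\varphi,u_n)$ and $\mathbf{1}_V\Gamma(u_n)$ converge in total variation to $\mathbf{1}_V\Gamma(\varphi)$; along a subsequence $v_n\to\psi$ $m$-a.e.\ on $V$ from the same lemma; and energy dominance together with $\|v_n\|_\infty\leq\|\psi\|_\infty$ provides the dominated-convergence majorant $\|\psi\|_\infty^2\,d\Gamma(\varphi)/dm$ needed to pass to the limit in each of the three terms.

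If the coordinates have finite energy, the remark following Definition \ref{D:coordseq} yields $\mathcal{P}(y)\subset\mathcal{F}$ directly, so one may take $u_n=f_n$ and $v_n=g_n$ globally; the cut-off $\mathbf{1}_V$ becomes unnecessary and $\mathcal{P}(y)\otimes\mathcal{P}(y)$ is already dense in $\mathcal{H}$. The main obstacle is the cross term in the density step: to pass from the energy convergence $\Gamma(u_n-\varphi)(V)\to 0$ and the pointwise convergence $v_n\to\psi$ to convergence of $\int\psi v_n\mathbf{1}_V d\Gamma(\varphi,u_n)$ one must combine Cauchy--Schwarz in $\Gamma$ with dominated convergence against the Radon--Nikodym density, simultaneously exploiting the $L_\infty$-truncation as a uniform majorant.
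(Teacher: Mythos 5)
Your treatment of the integral identity and of the passage from $V$ to $X$ is fine and matches what the paper intends (the paper dismisses the first claim as obvious). The genuine problem is in the density argument, specifically the truncation step. You replace the witnesses $v_n$ by their truncations at $\pm\|\psi\|_{\infty}$ in order to manufacture the uniform majorant needed for dominated convergence in the cross term and in $\int_V v_n^2\,d\Gamma(u_n)$. But the element you must place in the closure of $\lin\left\lbrace (f\otimes g)\mathbf{1}_V: f,g\in\mathcal{P}(y)\right\rbrace$ is $(f_n\otimes g_n)\mathbf{1}_V$ with $g_n\in\mathcal{P}(y)$, and its $\mathcal{H}$-norm is computed from the actual values of $g_n$ on $V$. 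Once you truncate $v_n$, the identification of $(f_n\otimes g_n)\mathbf{1}_V$ with $u_n\otimes(v_n\mathbf{1}_V)$ in $\mathcal{H}$ breaks down (the truncated $v_n$ no longer agrees with $g_n$ on $V$), and the truncated function is not a polynomial in the coordinates; so what your estimate actually controls is the distance from $\varphi\otimes\psi$ to an element outside the set whose span you are trying to prove dense. A secondary issue: truncation is a normal contraction, so it preserves boundedness of $\mathcal{E}_1$, but it does not automatically preserve strong $\mathcal{E}_1$-convergence of the sequence without an additional weak-convergence/Ces\`aro argument, which you do not supply.

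The paper sidesteps all of this by approximating the two tensor slots successively rather than simultaneously. First $\varphi\otimes\psi$ is approximated by $f_n\otimes\psi$ with $\psi\in\mathcal{C}$ held fixed; the error is $\int\psi^2\,d\Gamma(\varphi-f_n)\leq\|\psi\|_{\infty}^2\,\Gamma(\varphi-f_n)(U)$, so the fixed bounded $\psi$ supplies the majorant for free and Lemma \ref{L:locapprox} finishes this step. Only afterwards is $\psi$ replaced by $g_n\in\mathcal{P}(y)$ in the second slot, where the relevant quantity $\int_V(\psi-g_n)^2\,d\Gamma(f)$ involves a single fixed finite measure $\Gamma(f)|_V$, rather than the varying measures $\Gamma(\varphi,u_n)$ and $\Gamma(u_n)$ produced by your three-term expansion. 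If you wish to keep the simultaneous approach you must either justify a uniform $L_\infty(V)$ bound on the $g_n$ themselves (which Definition \ref{D:coordseq}(i) does not provide) or restructure the argument along the paper's two-step lines; as written, the step fails.
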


\begin{proof}
The first statement is obvious. To see the second, let $\varphi$ and $\psi$ be functions from $\mathcal{C}$ and $U$ a relatively compact open set containing $\supp \varphi$ on which $\varphi$ is locally approximated  on $U$ by a sequence $(f_n)_n\subset \mathcal{P}(y)$. We have
$\left\|\varphi\otimes\psi-f_n\otimes\psi\right\|_{\mathcal{H}}^2\leq \sup_{x\in X}|\psi(x)|^2\Gamma(\varphi-f_n)(U)$, 
which converges to zero by Lemma \ref{L:locapprox}. Hence the span of elements 
$f\otimes \psi$ with $f\in\mathcal{P}(y)$ and $\psi\in\mathcal{C}$
is dense in $\mathcal{H}$. 
On the other hand, 
if $V$ is a relatively compact open set containing $\supp\psi$ and $(g_n)_n\subset\mathcal{P}(y)$ approximates $\psi$ locally on $V$, 
after replacing the sequence by a suitable subsequence
Lemma \ref{L:locapprox} implies $\left\|f\otimes \psi-(f\otimes g_n)\mathbf{1}_V\right\|_{\mathcal{H}}^2=\int_V(\psi-g_n)^2\Gamma(f)dm\to0$
%tends to zero 
by dominated convergence.
\end{proof}

\begin{examples}%\mbox{}
%\begin{enumerate}
%\smallskip\item[]
\ (1) In  Example \ref{Ex:1}(1) the space $\mathcal{H}$  is isometrically isomorphic to the space $L_2(\mathbb{R}^n, \mathbb{R}^n)$ of $\mathbb{R}^n$-valued square integrable functions on~$\mathbb{R}^n$.
\smallskip\item[\ (2)] For the Riemannian situation in Example \ref{Ex:1}(2) the space $\mathcal{H}$ is isometrically isomorphic to the space $L_2(U, T^\ast M, dvol)$ of $L_2$-differential $1$-forms on $U\subset M$.
%\end{enumerate}
\end{examples}

\begin{remark}\label{R:locallyconvex}%\mbox{}
%\begin{enumerate}
%\smallskip\item[]
\ (i) The spaces $\mathcal{H}_x$ may be seen as the \emph{fibers} of the measurable $L_2$-bundle $\mathcal{H}$. Formula (\ref{E:fiberSP}) expresses the \emph{fibers} in terms of coordinates.
\smallskip\item[\ (ii)]
The spaces $\mathcal{H}_x$ depend on the choice of $m$. However, the space $\mathcal{H}$ does not, as follows from (\ref{E:energymeas}) and (\ref{E:HSP}).
\smallskip\item[\ (ii)] If the coordinates $y^i$ have finite energy then we may replace $\mathcal{C}$ by $\mathcal{P}(y)$ in (\ref{E:HSP}) and the subsequent formulas. 
By Lemma \ref{L:approx}, regularity and \cite[Theorem 2.1.4]{FOT94} this yields the same space $\mathcal{H}$.
\smallskip\item[\ (iii)] We formulated (\ref{E:HSP}) and (\ref{E:actions}) in terms of the algebra $\mathcal{C}$ in order to use the same definition of the space of $1$-forms as in \cite{CS03,
HKT, HRT13, IRT}. Alternatively - and in view of Definition \ref{D:coordseq} this seems more appropriate - one can endow $\mathcal{P}(y)\otimes\mathcal{P}(y)$ with a directed family of Hilbert seminorms determined by $\left\|f\otimes g\right\|_{\mathcal{H}(V)}:=\left\|(f\otimes g)\mathbf{1}_V\right\|_{\mathcal{H}}$,
where the sets $V$ are relatively compact and open. This yields a presheaf of Hilbert spaces  whose inverse limit is a locally convex space $\mathcal{H}_{loc}$. Details can be found in \cite[Section 6]{HKT13}. Also $\mathcal{F}_{loc}$ may be viewed as a locally convex space, and the derivation $\partial$ may then be interpreted as a continuous linear operator from $\mathcal {F}_{loc}$ into $\mathcal{H}_{loc}$, if (\ref{E:partialbound}) is replaced by
$\left\|\partial f\right\|_{\mathcal{H}(V)}^2=\Gamma(f)(V)$, $f\in\mathcal{P}(V)$.
%\end{enumerate}
\end{remark}

\section{Differential and gradient in coordinates}

For any coordinate function $y^i$ and any relatively compact open set $V$ the element $(\partial y^i)\mathbf{1}_V$ is an element of $\mathcal{H}$. This implies the identities
\[\left\langle (\partial y^i)_x, (\partial y^j)_x\right\rangle_{\mathcal{H}_x}=Z^{ij}(x)\]
for $m$-a.e. $x\in X$. Moreover, the local version of (\ref{E:Leibniz}) shows that for any function $f=F(y^{n_1},\dots, y^{n_k})$ from $\mathcal{P}(y)$ we have on any locally compact open set $V$
\begin{equation}\label{E:chainrule}
\partial f= \sum_{i=1}^k\frac{\partial F}{\partial y^{n_i}}\partial y^{n_i}
\end{equation}

\begin{examples}
In the Euclidean and Riemannian situations (1) and (2) in Examples \ref{Ex:1} the operator $\partial$ may be identified with the exterior derivation and formula (\ref{E:chainrule}) becomes the classical identity in (\ref{E:extder}).
\end{examples}

The operator $\partial$ may be viewed as a \emph{generalization of the exterior derivation} and (\ref{E:chainrule}) may be viewed as a formula for the \emph{differential} $\partial f$ of $f$ in terms of coordinates.

On a general metric measure space a smooth theory of ordinary differential equations is not available. On the other hand the spaces $\mathcal{H}_x$ are Hilbert, hence self-dual. Therefore it seems artificial to rigorously distinguish between $1$-forms and vector fields. We interpret the elements of $\mathcal{H}$ also as \emph{(measurable) vector fields} and $\partial$ as a \emph{substitute for the gradient operator}.

Recall the notation in (\ref{E:union}). Given a finite ordered subset $J$ of $I$ let the collection of $\mathcal{H}_x$-equivalence classes of elements of $\mathcal{P}_J\otimes\mathcal{P}_J(y)$ be denoted by $\mathcal{H}_{x,J}$. Clearly this is a subspace of $\mathcal{H}_x$, and we have 
\[\mathcal{H}_x=\clos \left(\bigcup_{J\subset I} \mathcal{H}_{x,J}\right),\]
the union taken over all finite ordered subsets $J$ of $I$. 

Now suppose $J=(n_1,\dots, n_k)$. Formula (\ref{E:chainrule}) implies that the elements $(\partial y^{n_1})_x$, $\dots $, $(\partial y^{n_k})_x$ span $\mathcal{H}_{x,J}$. Let $Z_J(x)$ denote the matrix $(Z(x)^{n_i n_j})_{i,j=1}^k$, clearly symmetric and nonnegative definite. The preceding formulas yield another \emph{expression of the gradient $\partial f$, now in terms of the Euclidean gradient and the measurable metric $Z$}: For any $f=F\circ y \in \mathcal{P}_J(y)$ and any $j=1,\dots, k$ we have
\begin{equation}\label{E:gradientformula}
\left\langle (\partial f)_x, (\partial y^{n_j})_x\right\rangle_{\mathcal{H}_x}=\sum_{i=1}^k\frac{\partial F}{\partial y^{n_i}}(y)Z^{n_i n_j}(x)=\left(Z_J(x)\nabla F(y)\right)_j,
\end{equation}
where $\nabla F$ is the gradient of $F$ on $\mathbb{R}^k$.

\begin{examples}%\mbox{}
%\begin{enumerate}
%\smallskip\item[]
\ (1) For Examples \ref{Ex:1} (1) we obtain
\[\left\langle (\partial f)_x, (\partial y^j)_x\right\rangle_{\mathcal{H}_x}=\sum_{i=1}^n a_{ij}(x)\frac{\partial f}{\partial y^i}(x)=\left(a(x)\nabla f(x)\right)_j.\]
\smallskip\item[\ (2)] In the Riemannian case of Examples \ref{Ex:1} (2) formula (\ref{E:gradientformula}) gives 
\[\left\langle (\partial f)_x, (\partial y^j)_x\right\rangle_{\mathcal{H}_x}=\left\langle df, dy^j\right\rangle_{T_x^\ast M}=\frac{\partial f}{\partial y^i}(x)\left\langle dy^i, dy^j\right\rangle_{T_x^\ast M}=g^{ij}(x)\frac{\partial f}{\partial y^i}(x).\]
This equals $dy^j(\grad f)$%, recall that in coordinates the gradient reads  
because 
$\displaystyle \grad f= g^{ij}\frac{\partial f}{\partial y^i}\frac{\partial}{\partial y^j}$. 
\smallskip\item[\ (3)] Let $\left\langle\langle \cdot,\cdot \right\rangle\rangle$ denote the cometric associated with the Heisenberg group $\mathbb{H}$. Then
\[\left\langle (\partial f)_q, (\partial y^1)_q\right\rangle_{\mathcal{H}_q}=\sum_{i=1}^3 \frac{\partial f}{\partial y^i}(q)\left\langle\langle dy^i, dy^1\right\rangle\rangle=\left(Z(q)\nabla f(q)\right)_1=Xf(q).\]
In a similar manner we obtain
\[\left\langle (\partial f)_q, (\partial y^2)_q\right\rangle_{\mathcal{H}_q}=\left(Z(q)\nabla f(q)\right)_2=Yf(q)\]
%and
\[\left\langle (\partial f)_q, (\partial y^3)_q\right\rangle_{\mathcal{H}_q}=\left(Z(q)\nabla f(q)\right)_3=-\frac{\eta}{2} Xf(q)+\frac{\xi}{2} Yf(q).\]
%\end{enumerate}
\end{examples}

\section{Divergence in coordinates}

By $-\partial^\ast$ we denote the adjoint of $\partial$, that is the unbounded linear operator $-\partial^\ast:\mathcal{H}\to L_2(X,\mu)$ with dense domain $\dom \partial^\ast$ and such that the integration by parts formula
\begin{equation}\label{E:ibp}
\left\langle v, \partial u\right\rangle_{\mathcal{H}}=-\left\langle \partial^\ast v,u\right\rangle_{L_2(X,\mu)}
\end{equation}
holds for all $v\in\dom \partial^\ast$ and $f\in\mathcal{F}$. We view the operator $-\partial^\ast$ both ways, as \emph{coderivation} and as \emph{divergence operator}.

In the context of coordinates it is more suitable to deviate a bit from the Hilbert space interpretation in (\ref{E:ibp}). First assume that all coordinates  $y^i$ have finite energy. For an element $(\partial f)g\mathbf{1}_V$ of $\mathcal{H}$ with $f,g\in\mathcal{P}(y)$ we then set
\[\partial^\ast((\partial f)g)(u):=-\left\langle (\partial f)g, (\partial u)\right\rangle_{\mathcal{H}},\ \ u\in\mathcal{P}(y).\]
By Cauchy-Schwarz 
%we have 
$|\partial^\ast((\partial f)g)(u)|\leq \left\|(\partial f)g\right\|_{\mathcal{H}}\:\mathcal{E}(u)$, and therefore $\partial^\ast(\partial f)g$ may be seen as a continuous linear functional on $\mathcal{P}(u)$, and after a straighforward extension by Definition \ref{D:coordseq} and regularity, on $\mathcal{F}$.

As before let $J=(n_1,\dots, n_k)$. Given functions polynomials $F$ and $G$ in $y^{n_1}$, $\dots$, $y^{n_k}$ and a function $u=U\circ y$ with $U\in C^1(\mathbb{R}^k)$ put
\[\diverg_{Z_J}(G\nabla F)(U):=-\sum_{i,j=1}^k \int_X G(y)\frac{\partial F}{\partial y^{n_i}}(y)Z^{n_i,n_j}(x)\frac{\partial U}{\partial y^{n_j}}(y)m(dx).\] 
Then
\begin{equation}\label{E:divergform}
\partial^\ast((\partial f)g)(u)=\diverg_{Z_J}(G\nabla F)(U)
\end{equation}
provides a 'distributional' \emph{coordinate expression for the divergence}. Of course this is a naive definition by duality, and in particular we have $\partial^\ast((\partial f))(u)=-\mathcal{E}(f,u)$. In general there is no integration by parts formula on the level of coordinates that could permit a more interesting definition.

If the coordinates $y^i$ do not have finite energy, we view
$\mathcal{P}(y)$ as a locally convex space, then $\partial^\ast(\partial f)g\mathbf{1}_V$ with  relatively compact open $V$ defines a continuous linear functional on $\mathcal{P}(y)$. Proceding similarly as before one obtains local versions of (\ref{E:divergform}).
\begin{examples}%\mbox{}
%\begin{enumerate}
%\smallskip\item[]
\ (1) For Example \ref{Ex:1} (1) we obtain 
\[\diverg_a(g\nabla f)(u)= -\sum_{i,j=1}^n\int_{\mathbb{R}^n}g(x)\frac{\partial f}{\partial x_i}(x) a_{ij}(x)\frac{\partial u}{\partial x_j}(x)dx\]
for any $u\in C_c^1(\mathbb{R}^n)$. If in addition the coefficients $a_{ij}$ are $C^1$, this is seen to equal
\[\int_{\mathbb{R}^n}\diverg(a(g\nabla f)) u \:dx.\]
\smallskip\item[\ (2)] In the Riemannian situation of Examples \ref{Ex:1} (ii) we have 
\[\diverg_g(h\nabla f)(u)=\int_{W} g^{ij}h\frac{\partial f}{\partial y^i}\frac{\partial u}{\partial y^j}\sqrt{g} dy^1\cdots dy^n=\int_U \diverg(h\grad f)\:u\:dvol\]
for any $u\in C_c^1(U)$, where $g:=\det (g_{ij})$ and 
\[\diverg(h\grad f)=\frac{1}{\sqrt{g}}\frac{\partial}{\partial y^j}\left(\sqrt{g} g^{ij}h\frac{\partial f}{\partial y^i}\right)\] 
is the divergence of $h\grad f$ in the usual Riemannian sense. See \cite[Section 2.1]{Jost}.
\smallskip\item[\ (3)] In Example \ref{Ex:1} (3) formula (\ref{E:divergform}) yields
\[\diverg_Z(g\nabla f)(u)=\sum_{i,j=1}^3\int_U Z^{ij} g\frac{\partial f}{\partial y^i}\frac{\partial u}{\partial y^j}\:d\lambda^3=\sum_{i,j=1}^3\int_U\frac{\partial}{\partial y^j}\left(Z^{ij} g\frac{\partial f}{\partial y^i}\right)\:u\:d\lambda^3\]
for any $u\in C_c^1(U)$, what equals
\[\int_U\left(\frac{\partial}{\partial \xi}gXf+\frac{\partial}{\partial\eta}gYf+\frac{\partial}{\partial\zeta}\left(g(-\frac{\eta}{2}Xf+\frac{\xi}{2}Yf)\right)\right)\:u\:d\lambda^3=\int_U\diverg(Z(g\nabla f))\:u\:d\lambda^3,\]
where $\diverg$ is the ordinary divergence operator on $\mathbb{R}^3$.
%\end{enumerate}
\end{examples}

\section{Generator in coordinates}

We consider the infinitesimal generator $(L, \dom\:L)$ of $(\mathcal{E},\mathcal{F})$. From (\ref{E:ibp}) and the definition of the adjoint we see that for any $f\in \dom \:L$ we have $\partial f\in\dom\:\partial^\ast$ and 
\begin{equation}\label{E:divgrad}
Lf=\partial^\ast\partial f.
\end{equation}
Although in general a coordinate version of this formula may not be available,
it can be written in terms of coordinates for specific examples. 

To express $L$ in coordinates additional assumptions are inevitable. Even if $y^i\in \dom L$ for all $i$ the inclusion $\mathcal{P}(y)\subset \dom L$ holds if and only if the reference measure $\mu$ itself is energy dominant, that is if $(\mathcal{E},\mathcal{F})$ admits a \emph{carr\'e du champ} in the sense of \cite{BH91}.  For Examples \ref{Ex:1} (1)-(4) this is satisfied. However, the standard resistance form on the Sierpinski gasket, considered as a Dirichlet form with respect to the natural self-similar Hausdorff measure, does not have this property, and this situation is typical for a large class of self-similar spaces, \cite{BBST, Hino05, HinoNak06}.
\begin{assumption}\label{A:carre}
\text{The reference measure $\mu$ itself is energy dominant.}
\end{assumption}
Let $(L, \dom_{(1)} L)$ denote the smallest closed extension of the restriction of $L$ to 
$$\left\lbrace f\in \dom L\cap L_1(X,\mu): Lf\in L_1(X,\mu)\right\rbrace.$$ Assumption \ref{A:carre} is known to be necessary and sufficient for $\dom_{(1)} L \cap L_\infty(X,\mu)$ to be an algebra under pointwise multiplication. If it is in force, then $f,g\in \dom L$ implies $fg\in\dom_{(1)} L$ and we have
\begin{equation}\label{E:carreformula}
\frac{d\Gamma(f,g)}{d\mu}=L(fg)-fLg-gLf, 
\end{equation}
see \cite[Theorems I.4.2.1 and I.4.2.2]{BH91}. To formulate local conditions on the coordinate functions we follow \cite[Definition 4.2 (2)]{LT13} and say that a function $f\in L_{2,loc}(X,\mu)$ \emph{belongs to the strong local domain $\dom_{loc} L$ of $L$} if for any relatively compact open set $V$ there exists some $u\in\mathcal{F}$ such that $f|_V=u|_V$ $\mu$-a.e. Similarly we define $\dom_{(1),\:loc} L$. Then identity (\ref{E:carreformula}) holds for any $f,g\in \dom_{loc}\:L$ locally on any relatively compact open set $V$.
\begin{assumption}\label{A:domloc}
\text{The coordinates $y^i$ are members of $\dom_{loc}\:L$.}
\end{assumption} 

Let Assumptions \ref{A:carre} and \ref{A:domloc} be in force. This implies $\mathcal{P}(y)\subset \dom_{loc}\:L$. Suppose $f=F\circ y\in\mathcal{P}(y)$, where again $J=(n_1,\dots, n_k)$. Using (\ref{E:carreformula}) on the coordinates $y^i$ and iterating, we inductively arrive at a \emph{coordinate formula for the generator}
\[Lf(x)=\sum_{i,j=1}^k \frac{\partial^2 F}{\partial y^{n_j}\partial y^{n_j}}(y)Z^{n_i n_j}(x)+\sum_{i=1}^k\frac{\partial F}{\partial y^{n_i}}(y)Ly^{n_i}(x),\]
valid locally on any relatively compact open $V$. This is a version of a well known identity, see e.g. \cite[Lemma 6.1]{Em89} or \cite{Eb99}.

\begin{examples}
%\mbox{}\begin{enumerate}\smallskip\item[]
\ (1) For Example \ref{Ex:1} (1) with $C^1$-coefficients $a_{ij}$ we have 
\[Lf=\diverg(a\nabla f)=\sum_{i,j=1}^n\frac{\partial^2 f}{\partial x_i\partial x_j}a_{ij}+\sum_{i=1}^n \frac{\partial f}{\partial x_i}\sum_{j=1}^n\frac{\partial a_{ij}}{\partial x_j}.\]
\smallskip\item[\ (2)] For Example \ref{Ex:1} (2) we observe
\[\Delta f=\diverg(\grad f)=\frac{1}{\sqrt{g}}\frac{\partial}{\partial y^j}\left(\sqrt{g} g^{ij}\frac{\partial f}{\partial y^i}\right),\]
what differs by a minus sign from the Laplace-Beltrami operator (convention). 
\smallskip\item[\ (3)] For Example \ref{Ex:1} (3) arrive at the Heisenberg sub-Laplacian,
\begin{align}
Lf=\diverg(Z\nabla f)&=\frac{\partial^2 f}{\partial\xi^2}+\frac{\partial^2 f}{\partial \eta^2}+\xi\frac{\partial^2 f}{\partial\eta\partial\zeta}+\eta\frac{\partial^2 f}{\partial\xi\partial\zeta}+\frac{\xi^2+\eta^2}{4}\frac{\partial^2 f}{\partial\zeta^2}\notag
%\\&
=\left( X^2+Y^2\right)f.\notag
\end{align}
\smallskip\item[\ (4)] In Example \ref{Ex:1} (4) the Dirichlet form generator of $(\mathcal{E},\mathcal{F})$ on $L_2(K,\nu)$ is the Kusuoka Laplacian $(\Delta_\nu, \dom\:\Delta_\nu)$. The coordinate functions $y^i$ are harmonic, that is $y^i \in \dom\:\Delta_{\nu}$ and $\Delta_\nu y^i=0$, $i=1,2$. Accordingly we have
\[\Delta_\nu f(x)=\sum_{i,j=1}^2 \frac{\partial^2 F}{\partial y^i\partial y^j}(y) Z^{ij}(x)\]
for any $f=F\circ y\in\mathcal{P}(y)$. This can be rewritten as $\tr(Z(x) D^2F(y))$, where $D^2F$ is the Hessian of $F$ and $\tr$ the trace operator, see \cite[Theorem 8]{T08}. 
%\end{enumerate}
\end{examples}

\section{Constructing coordinate sequences}\label{S:construct}

Let $(\mathcal{E},\mathcal{F}$ be a strongly local regular Dirichlet form. Under some continuity condition it is always possible to simultaneously construct a smooth energy dominant measure and a corresponding coordinate sequence. The latter may be designed to have nice decay properties. Let $(P_t)_{t>0}$ denote the Markovian semigroup uniquely associated with $(\mathcal{E}, \mathcal{F})$, \cite{ChFu12, FOT94}. If it is also  a strongly continuous semigroup of contractions $P_t: C_0(X)\to C_0(X)$ on the space $C_0(X)$ of continuous functions vanishing at infinity, then it is called a \emph{Feller semigroup}.

\begin{examples}
The transition semigroups of many diffusion processes of Euclidean domains or manifolds are Feller semigroups. Also the semigroups of many diffusions on fractals are known to be Feller, see for instance \cite{Ba98, BB99, BBKT, Ki01}.
\end{examples}

\begin{lemma}\label{L:construct}
Assume that the semigroup $(P_t)_{t>0}$ is a Feller semigroup. Then there exists a finite smooth energy dominant measure $\widetilde{m}$ and a coordinate sequence $(y^i)_{i\in I}\subset \dom L$ for $(\mathcal{E},\mathcal{F})$ with respect to $\widetilde{m}$ such that 
%\begin{enumerate}
\smallskip\item[\ (i)] $\lin (\left\lbrace y^i\right\rbrace_{i\in I})$ is dense in $\mathcal{F}$,
\smallskip\item[\ (ii)] for any $i$ also the functions $Ly^i$ are continuous,
\smallskip\item[\ (iii)] we have
%\[
$\displaystyle \sum_{i=1}^\infty\left\|y^i\right\|_{\sup}^2<+\infty\ \ \text{ and }\ \ \sum_{i=1}^\infty \left\|Ly^i\right\|_{\sup}^2<+\infty.$
%\]
%\end{enumerate}
\end{lemma}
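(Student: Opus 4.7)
The plan is to build the coordinates from a countable $\mathcal{E}_1$-dense subset of $\mathcal{C}$, push each element into $\dom L$ via the resolvent $G_\alpha = (\alpha - L)^{-1}$, then rescale and take $\widetilde{m}$ to be the sum of their energy measures. The crucial design choice is $\widetilde{m} := \sum_i \Gamma(y^i)$: it forces $\sum_i Z^{ii}(x) = \sum_i \frac{d\Gamma(y^i)}{d\widetilde{m}}(x) = 1$ for $\widetilde{m}$-a.e.\ $x$, so $Z(x)$ is automatically trace class with $\|Z(x)\|_{l_2\to l_2} \leq \tr Z(x) = 1$ by symmetry and nonnegative definiteness, trivializing the operator bound in Definition~\ref{D:coordseq}(ii).

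Using separability of $(\mathcal{F}, \mathcal{E}_1^{1/2})$ together with regularity condition (4), I pick a countable $\mathcal{E}_1$-dense sequence $(\varphi_k)_k \subset \mathcal{C}$. Since $\alpha G_\alpha \varphi \to \varphi$ in $\mathcal{E}_1$ as $\alpha \to \infty$ for every $\varphi \in \mathcal{F}$, I select $\alpha_k$ large enough that $\tilde y^k := \alpha_k G_{\alpha_k}\varphi_k \in \dom L$ satisfies $\mathcal{E}_1(\tilde y^k - \varphi_k) < 2^{-k}$. Because $\varphi_k \in C_c(X) \subset C_0(X)$ and the Feller property makes $P_t \colon C_0(X) \to C_0(X)$ a contraction, the representation $\tilde y^k = \int_0^\infty \alpha_k e^{-\alpha_k t} P_t\varphi_k\,dt$ converges absolutely in uniform norm, placing $\tilde y^k$ in $C_0(X)$; and $L\tilde y^k = \alpha_k(\tilde y^k - \varphi_k) \in C_0(X)$ as well.

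I now scale by $c_k := 2^{-k/2}/(1 + \|\tilde y^k\|_{\sup} + \|L\tilde y^k\|_{\sup} + \mathcal{E}(\tilde y^k)^{1/2})$, set $y^k := c_k \tilde y^k$, and define $\widetilde{m} := \sum_{k=1}^\infty \Gamma(y^k)$. The geometric factor $2^{-k/2}$ yields (iii), bounds $\widetilde{m}(X) = \sum_k \mathcal{E}(y^k)$ by $\sum_k 2^{-k} < \infty$, and (since $c_k > 0$) leaves $\lin\{y^k\}$ still $\mathcal{E}_1$-dense in $\mathcal{F}$ by the triangle inequality, giving (i); continuity of $Ly^k = c_k L\tilde y^k$ is (ii); smoothness of $\widetilde{m}$ follows from smoothness of each $\Gamma(y^k)$ (a standard property of energy measures of $\mathcal{F}$-functions).

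For Definition~\ref{D:coordseq}(i): any $\varphi \in \mathcal{C}$ is $\mathcal{E}_1$-approximated by $f_n \in \lin\{y^k\} \subset \mathcal{P}(y) \subset \mathcal{F}$, so one may take $u_n = f_n$ in the local-approximation scheme on any relatively compact open $V$. For Definition~\ref{D:coordseq}(ii), $\Gamma(y^i) \leq \widetilde{m}$ gives $\frac{d\Gamma(y^i)}{d\widetilde{m}} \in L_\infty(X,\widetilde{m})$ with bound $1$, while $\int_X d\Gamma(y^i) = \mathcal{E}(y^i) < \infty$ gives the $L_1$ bound; symmetry and nonnegative-definiteness of $Z(x)$ come from $\sum_{i,j=1}^N \xi_i \xi_j \Gamma(y^i,y^j) = \Gamma\bigl(\sum_{i=1}^N \xi_i y^i\bigr) \geq 0$, and the $l_2$-operator bound is the trace identity. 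Finally, energy dominance of $\widetilde{m}$ follows because $\widetilde{m}(A) = 0$ forces each $\Gamma(y^k)(A) = 0$; then for $f \in \mathcal{F}$ with $f_n \in \lin\{y^k\}$ approximating $f$ in $\mathcal{E}_1$, the seminorm property yields $\Gamma(f)(A)^{1/2} \leq \mathcal{E}(f-f_n)^{1/2} \to 0$. The one delicate task is to synchronize a single sequence $c_k$ so that (iii), finiteness of $\widetilde{m}$, and $\mathcal{E}_1$-density of $\lin\{y^k\}$ hold simultaneously; this is the main but mild obstacle, and the displayed weights handle it in one stroke.
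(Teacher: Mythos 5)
Your proposal is correct and follows essentially the same route as the paper: apply the resolvent to a countable dense family (using the Feller property for continuity of $y^i$ and $Ly^i$), rescale by geometric weights to get (iii) and finiteness of $\widetilde{m}$, and take $\widetilde{m}$ to be a sum of the energy measures $\Gamma(y^i)$. The only cosmetic differences are that the paper weights the sum as $\widetilde{m}=\sum_i 2^i\Gamma(y^i)$ and bounds $\|Z(x)\|_{l_2\to l_2}$ by a Schur-type estimate from $Z^{ii}\leq 2^{-i}$, whereas your unweighted sum gives the slightly cleaner trace-one bound; you also spell out energy dominance and the density argument (via $\alpha_kG_{\alpha_k}\varphi_k\to\varphi_k$) a bit more explicitly than the paper does.
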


%The coordinates $y^i$ as constructed in Lemma \ref{L:construct} are energy finite, hence $\mathcal{P}(y)\subset \mathcal{F}$.

\begin{proof}
Let $\left\lbrace f_i\right\rbrace_i\subset C_c(X)$ be a countable family of nonzero functions that is dense in $L_2(X,\mu)$. By the Feller property,  the resolvent functions
$ G_1f_i(x):=\int_0^\infty e^{-t}P_tf(x)dt$, 
are continuous and  $G_1f_i\in \dom L$. Set

\centerline{$ y^i:=
%\frac
{2^{-n}G_1f_i}\Big/{\left(\left\|G_1 f_i\right\|_{\sup}+\left\|f_i\right\|_{\sup}+\mathcal{E}(G_1f_i)^{1/2}\right)}.$}

\noindent
Then (ii) and (iii) are satisfied. The range $\im G_1$ of $G_1:L_2(X,\mu)\to L_2(X,\mu)$ is dense in $\mathcal{F}$ and any element of $\im G_1$ can be approximated in $\mathcal{F}$ by linear combinations of the functions $G_1f_i$, what implies (i). Now set $ \widetilde{m}:=\sum_{i=1}^\infty 2^i \Gamma(y^i)$. 
Because the energy measures $\Gamma(y^i)$ satisfy
$\displaystyle \Gamma(y^i)\leq \frac{\Gamma(G_1 f_i)}{2^{2n}\mathcal{E}(G_1f_i)}\leq 2^{-2i}$, 
we have $\widetilde{m}(X)\leq \sum_{i=1}^\infty 2^{-i}<+\infty$. Since all energy measures are smooth, so is $\widetilde{m}$. For the densities 
we observe
$ Z^{ii}=\frac{d\Gamma(y^i)}{d\widetilde{m}}\leq \frac{d\Gamma(y^i)}{2^i d\Gamma(y^i)}\leq 2^{-i}\ \ \text{ $\widetilde{m}$-a.e.}$
Polarizing and choosing appropriate $\widetilde{m}$-versions of the functions $Z^{ij}$, we may assume that for $m$-a.e. $x\in X$ and any $N\in\mathbb{N}$ the matrix $(Z^{ij}(x))_{i,j=1}^N$ is symmetric and nonnegative definite. To do so it suffices to note that given $v_1,\dots, v_N\in\mathbb{R}$,
$ 0\leq \Gamma\left(\sum_{i=1}^N v_iy^i\right)(A)=\int_A\sum_{i=1}^N Z^{ij}(x)v_iv_j\:\widetilde{m}(dx)$
is a nonnegative Radon measure, hence its density must be nonnegative $\widetilde{m}$-a.e. By letting $N$ go to infinity we can finally obtain  

\centerline{$\displaystyle
\left\|Z(x)v\right\|_{l_2}^2\leq 
\sum_{i,j} |Z^{ij}(x)|^2|v_j|^2\leq 
\sum_{i,j} |Z^{ii}(x)||Z^{jj}(x)||v_j|^2\leq 
\sum_{i,j} 2^{-i-j}|v_j|^2\leq \left\|v\right\|_{l_2}^2
$}

\noindent
for any $v=(v_1,v_2,\dots)\in l_2$, what allows to conclude that $Z(x)$ is bounded, symmetric and nonnegative definite on $l_2$ for $\mu$-a.e. $x\in X$.
\end{proof}

\end{document}